\pgfplotsset{compat=1.18}
\definecolor{mycolor1}{RGB}{31,114,255}
\definecolor{mycolor2}{RGB}{44,160,44}
\definecolor{mycolor3}{RGB}{255,127,14}
\setlist[enumerate]{nosep}
\definecolor{labelkey}{rgb}{0,0.08,0.45}
\definecolor{refkey}{rgb}{0,0.6,0.0}
\definecolor{Brown}{rgb}{0.45,0.0,0.05}
\definecolor{lime}{rgb}{0.00,0.8,0.0}
\definecolor{lblue}{rgb}{0.5,0.5,0.99}
\colorlet{hlcyan}{cyan!30}
\def\namedlabel#1#2{\begingroup
   \def\@currentlabel{#2}%
   \label{#1}\endgroup
}
\newcommand{\seppfour}{\setlength{\itemsep}{-4pt}}
\newcommand{\R}[1]{\ensuremath{{\operatorname{R}}_%
{#1}}}
\newcommand{\menge}[2]{\big\{{#1}~\big |~{#2}\big\}}
\newcommand{\fenv}[1]%
{\ensuremath{\,\overrightarrow{\operatorname{env}}_{#1}}}
\newcommand{\benv}[1]%
{\ensuremath{\,\overleftarrow{\operatorname{env}}_{#1}}}
\newcommand{\RR}{\ensuremath{\mathbb R}}
\crefname{lemma}{Lemma}{Lemmas}
\crefname{equation}{}{equations}
\crefname{figure}{Figure}{Figures}
\crefname{chapter}{Appendix}{chapters}
\crefname{item}{}{items}
\crefname{enumi}{}{}
\crefname{enumii}{}{}
\newtheorem{theorem}{Theorem}[section]
\newtheorem{lemma}[theorem]{Lemma}
\newtheorem{corollary}[theorem]{Corollary}
\newtheorem{proposition}[theorem]{Proposition}
\newtheorem{fact}[theorem]{Fact}
\newtheorem{remark}[theorem]{Remark}
\providecommand{\RR}{\mathbb{R}}
\providecommand{\sign}{\operatorname{sign}}
\providecommand{\gra}{\operatorname{gra}}
\providecommand{\R}{{ R}}
\providecommand{\RR}{\mathbb{R}}
\providecommand{\C}{\mathcal{C}}
\definecolor{myblue}{rgb}{.8, .8, 1}
  \newcommand*\mybluebox[1]{%
    \colorbox{myblue}{\hspace{1em}#1\hspace{1em}}}
\newcommand{\px}{\overline{x}}
\newcommand{\py}{\overline{y}}
\tikzset{cross/.style={cross out, draw, 
         minimum size=2*(#1-\pgflinewidth), 
         inner sep=0pt, outer sep=0pt}}
\begin{document}

\title{\textsc{
Projection onto the parabola}}
\author{
Francisco J. Arag\'on-Artacho\thanks{
                 Mathematics, Universidad de Alicante, Spain.
                 E-mail: \texttt{francisco.aragon@ua.es}},~
Heinz H.\ Bauschke\thanks{
Mathematics, University
of British Columbia,
Kelowna, B.C.\ V1V~1V7, Canada. E-mail:
\texttt{heinz.bauschke@ubc.ca}.},~
         and C\'esar L\'opez-Pastor\thanks{
                 Mathematics, Universidad de Alicante, Spain.
                 E-mail: \texttt{cesar.lopez@ua.es}.}
                 }

\date{\today}

\maketitle

\vskip 8mm

\begin{abstract} 
In this note, we provide explicit expressions for the projections onto the graph of a quadratic polynomial. The projections are obtained by examining the critical points of the associated quartic polynomial, that is, the roots of the cubic polynomial defining its derivative. We also focus on the case where the point we project lies on the vertical line defined by the parabola. Lastly, an explicit formula for the projection onto a higher dimensional parabola is derived.
\end{abstract}

{\small
\noindent
{\bfseries 2020 Mathematics Subject Classification:}
{Primary 90C25; 
Secondary 12D10, 26C10, 90C26
}

\noindent {\bfseries Keywords:}
Convex quartic, 
cubic equation, 
cubic polynomial,
quadratic polynomial,
projection,
root
}

\section{Introduction}\label{introduction}

Given a general quadratic
\begin{empheq}[box=\mybluebox]{equation*}
s(x) := \alpha x^2 + \beta x + \gamma,
\quad\text{where}\;\;
\alpha \in\RR\setminus\{0\}, \beta \in\RR, \gamma\in\RR,
\end{empheq}
the purpose of this note is to find an explicit formula for the projection onto $S$, the graph (not the epigraph) of 
this quadratic:
\begin{empheq}[box=\mybluebox]{equation*}
S := \gra(s) := \menge{(x,\alpha x^2 + \beta x + \gamma)}{x\in\RR}.
\end{empheq}
Thus, by fixing an arbitrary point that does not belong to the graph
\begin{empheq}[box=\mybluebox]{equation*}
(\px,\py)\in \RR^2\setminus S, 
\quad \text{so }\py\neq s(\px),
\end{empheq}
we are interested in solving the following optimization problem:
\begin{equation}\label{problem}\tag{P}
    \begin{aligned}
    \text{Min }\;&\|(x,\alpha x^2 + \beta x+\gamma)-(\px,\py)\|\\
    \text{s.t. }\;&x\in\RR.
\end{aligned}
\end{equation}
This is equivalent to minimize the quartic 
\begin{align}
Q(x) 
&:= \|(x,\alpha x^2 + \beta x+\gamma)-(\px,\py)\|^2 \nonumber\\
&=
(x-\px)^2 + (\alpha x^2 + \beta x+\gamma-\py)^2 \nonumber\\
&=
\alpha^2 x^4 + \big(2\alpha\beta\big)x^3
+ \big(1+\beta^2 + 2\alpha(\gamma-\py)\big)x^2 
+ \big(2(\beta(\gamma-\py)-\px)\big)x
+ (\gamma-\py)^2 + \px^2.\label{e:Q}
\end{align}
Using the optimality conditions, our goal is to compute the critical points of $Q$ and determine from them its global minima. Since the derivative of a quartic is a cubic polynomial, our problem turns into finding the roots of a cubic. These computations rely on the work \cite{arxiv1}, where the authors provide explicit expressions for these roots. Its main results are summarized in the next section, where we also provide a useful lemma for finding the extrema of a quartic whose derivative has three distinct real roots.

A closed-form expression for computing the projection onto the parabola $4\mathfrak{c}y=x^2$ was recently presented in \cite{chou}. In our notation, this corresponds to $\alpha=1/(4\mathfrak{c})$ and $\beta=\gamma=0$, but this specific form can be assumed without loss of generality by translating the graph. 
Nevertheless, as we point out in \cref{remark}, the formula obtained in~\cite{chou} applies only to certain points in the space, particularly those for which the projection is single-valued. Another related result is~\cite[Theorem~7.1]{arxiv1}, where the authors provide a formula for the projection onto the epigraph of the parabola $h(\mathbf{x})=\alpha\|\mathbf{x}\|^2$, with $\mathbf{x}\in\RR^n$ and $\alpha>0$. In \cref{sec:higher}, we derive a closed-form expression for the projection onto the graph of this parabola, which thus includes points in the interior of the epigraph where the projection is multi-valued. Therefore, the analysis in this note is more comprehensive, covering and extending the corresponding results in \cite{arxiv1} and \cite{chou}.

\section{Auxiliary results}\label{sec:facts}

This section contains some relevant results that will allow us to derive our main theorem on the projections onto a parabola. Let us consider the general cubic polynomial
\begin{empheq}[box=\mybluebox]{equation}
\label{e:gencubic}
f(x) := ax^3+bx^2+cx+d, 
\quad\text{where}\;\;
a,b,c,d\;\text{are in}\;\RR\;\text{and}\; a>0.
\end{empheq}
Note that $f''(x)=6ax+2b$ has exactly one zero, namely
\begin{equation}
\label{e:defx0}
x_0 := \frac{-b}{3a}.
\end{equation}
The change of variables
\begin{equation}
\label{e:charvar}
x = z+x_0 
\end{equation}
leads to the well known depressed cubic
\begin{equation}
\label{e:defpq}
g(z) := z^3 + pz+q, \;\;\text{where}\;\; p := \frac{3ac-b^2}{3a^2}
\;\;\text{and}\;\; q := \frac{27a^2d+2b^3-9abc}{27a^3}.
\end{equation}
Here $ag(z)=f(x)=f(z+x_0)$, so 
the roots of $g$ are precisely those of $f$ translated by $x_0$; i.e.,
\begin{equation*}
\text{$x$ is a root of $f$ $\Leftrightarrow$ $x-x_0$ is a root of $g$.}
\end{equation*}
Hence, to obtain the roots of $f$, all we need to do is find the roots of $g$ and then add $x_0$ to them. Because the change of variables \cref{e:charvar} is linear,
the multiplicity of the roots is preserved. The following result from~\cite{arxiv1} provides explicit expressions of the roots of a general cubic.

\begin{fact}[{\cite[Corollary 3.2]{arxiv1}}] \label{c:genroots} Consider the cubic~\cref{e:gencubic}, recall \cref{e:defx0} and \cref{e:defpq}, and
set  $\Delta := (p/3)^3+(q/2)^2$.
Then exactly one of the following holds:
\begin{enumerate}
\item 
\label{c:genroots1}
\fbox{$p=0$ or $\Delta>0$}\,: 
Then $f$ has exactly one real root and it is given by
\begin{equation}
x_0+ \sqrt[\mathlarger 3]{\frac{-q}{2}+ \sqrt{\Delta}}
+
\sqrt[\mathlarger 3]{\frac{-q}{2}-\sqrt{\Delta}}. 
\end{equation}
\item 
\label{c:genroots2}
\fbox{$p<0$ and $\Delta=0$}\,: 
Then $f$ has two distinct real roots: The simple root is
\begin{equation*}
x_\text{s}:=x_0+\frac{3q}{p} = x_0+2\sqrt[\mathlarger 3]{\frac{-q}{2}}
\end{equation*}
and the double root is
\begin{equation*}
x_\text{d}:=x_0-\frac{3q}{2p} = x_0 -\sqrt[\mathlarger 3]{\frac{-q}{2}}
\end{equation*}
\item 
\label{c:genroots3}
\fbox{$\Delta<0$}\,:
Then $f$ has exactly three real (simple) roots  $r_0,r_1,r_2$, where
\begin{equation}\label{e:theta}
r_k := x_0+2(-p/3)^{1/2}\cos\Big(\frac{\theta+2k\pi}{3} \Big), 
\quad\text{}\;\;
\theta := \arccos \frac{-q/2}{(-p/3)^{3/2}},
\end{equation}
and $r_1<r_2<r_0$. 
\end{enumerate}
\end{fact}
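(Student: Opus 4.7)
The plan is to prove \cref{c:genroots} by the classical reduction to the depressed cubic, followed by a case analysis on the sign of $\Delta$. First, the linear shift \cref{e:charvar} eliminates the quadratic term and, being bijective and degree-preserving, transfers the root structure of $f$ to that of $g(z)=z^3+pz+q$ unchanged, so it suffices to analyse $g$ and then translate every root by $x_0$.

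For case \cref{c:genroots1}, apply Cardano's substitution $z=u+v$ together with the auxiliary constraint $3uv=-p$. Expanding $(u+v)^3+p(u+v)+q=0$ collapses, via the constraint, to $u^3+v^3=-q$; combined with $u^3v^3=-p^3/27$ this shows that $u^3$ and $v^3$ are the two solutions of the quadratic $t^2+qt-p^3/27=0$, namely $t_\pm=-q/2\pm\sqrt{\Delta}$. If $p=0$ only a single cube root is needed, while if $\Delta>0$ both $t_\pm$ are real and distinct; in either sub-case the unique real pairing of cube roots satisfying $3uv=-p$ yields the sole real root in the stated form. For case \cref{c:genroots2}, the equality $\Delta=0$ gives $(q/2)^2=-(p/3)^3$, so $\sqrt[3]{-q/2}$ is real; combining $g(z)=0$ with $g'(z)=3z^2+p=0$ produces a double root $z_\text{d}$ with $z_\text{d}^2=-p/3$, and then Vieta's relation $z_\text{s}+2z_\text{d}=0$ pins down $z_\text{s}$, yielding both equivalent closed forms (in terms of $q/p$ and of $\sqrt[3]{-q/2}$) after verification via direct substitution.

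The most delicate case is \cref{c:genroots3}. The hypothesis $\Delta<0$ forces $(q/2)^2<-(p/3)^3$, which in particular implies $p<0$ and so legitimises the trigonometric substitution $z=2\sqrt{-p/3}\,\cos\varphi$. Using the identity $4\cos^3\varphi-3\cos\varphi=\cos 3\varphi$, the equation $g(z)=0$ reduces to
\begin{equation*}
\cos 3\varphi \;=\; \frac{-q/2}{(-p/3)^{3/2}};
\end{equation*}
the very same inequality is precisely what guarantees that the right-hand side lies in $(-1,1)$, so setting $3\varphi=\theta+2k\pi$ for $k\in\{0,1,2\}$ produces three distinct real preimages, which after the back-substitution $x=z+x_0$ give the formula \cref{e:theta}.

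The main obstacle is not the algebra but the bookkeeping required to verify the ordering $r_1<r_2<r_0$. With $\theta\in(0,\pi)$, the three arguments $\theta/3$, $(\theta+2\pi)/3$ and $(\theta+4\pi)/3$ lie in disjoint subintervals of $[0,2\pi]$ on which cosine takes distinct and sign-controlled values; comparing these values, all scaled by the common positive factor $2\sqrt{-p/3}$, then establishes the prescribed ordering after a short case analysis on $\theta\in(0,\pi/2]$ versus $\theta\in(\pi/2,\pi)$.
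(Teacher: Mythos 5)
The paper itself does not prove this statement: it is imported verbatim as a Fact from \cite[Corollary~3.2]{arxiv1}, so there is no internal proof to compare yours against. Taken on its own terms, your derivation --- reduction to the depressed cubic via \cref{e:charvar}, Cardano's substitution for \cref{c:genroots1}, the system $g=g'=0$ plus Vi\`ete for \cref{c:genroots2}, and the trigonometric substitution for \cref{c:genroots3} --- is the standard route and is essentially correct. Two small points. First, in case \cref{c:genroots1} you assert that the root produced by Cardano's pairing is ``the sole real root,'' but the construction only \emph{exhibits} a real root; to rule out further real roots you still need one line, e.g.\ that for $p\ge 0$ the depressed cubic $g$ is strictly increasing (so it has exactly one real zero), while for $p<0$ with $\Delta>0$ the product of the values of $g$ at its two critical points $\pm\sqrt{-p/3}$ equals $q^{2}+4p^{3}/27=4\Delta>0$, so the local maximum and local minimum values of $g$ have the same sign and $g$ crosses zero exactly once. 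Second, the ordering $r_1<r_2<r_0$ in case \cref{c:genroots3} needs no case split on $\theta$: since $\theta\in\left]0,\pi\right[$, the three angles $(\theta+2k\pi)/3$ for $k=0,1,2$ lie in $\left]0,\pi/3\right[$, $\left]2\pi/3,\pi\right[$ and $\left]4\pi/3,5\pi/3\right[$ respectively, so their cosines lie in $\left]1/2,1\right[$, $\left]-1,-1/2\right[$ and $\left]-1/2,1/2\right[$, which gives both the distinctness of the three roots and the stated ordering immediately. With these two lines added, your argument is a complete, self-contained proof of the Fact.
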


The following lemma will allow us to identify the global minimizers of $Q$ in~\cref{e:Q} when its derivative has three distinct real roots.

\begin{lemma}\label{l:Alicante1}
Let $Q(x)$ be a real quartic polynomial with leading coefficient 
$ax^4$ and $a>0$. Suppose that its derivative is
\begin{equation*}
f(x) := Q'(x) = 4a(x-r_1)(x-r_2)(x-r_3),
\end{equation*}
with distinct real roots ordered as $r_1<r_2<r_3$. 
Then $r_1$ and $r_3$ are the only local minimizers of $Q$, and $r_2$ is the only local maximizer of $Q$. Further, the following holds:
\begin{enumerate}
\item 
If $2r_2 - r_1-r_3=0$, then both $r_1$ and $r_3$ are the only global minimizers of $Q$.
\item
If $2r_2 - r_1-r_3<0$, then $r_3$ is the unique global minimizer of $Q$.
\item 
If $2r_2 -r_1-r_3>0$, then $r_1$ is the unique global minimizer of $Q$.
\end{enumerate}
Put differently, the set of global minimizers of $Q$ is the subset of roots 
drawn from $\{r_1,r_3\}$ that are farthest away from $r_2$.
\end{lemma}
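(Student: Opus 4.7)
The plan is to handle the local extrema question by a routine sign analysis of $f=Q'$, and then to obtain the global minimizer criterion via a single integral computation, $Q(r_3)-Q(r_1)=\int_{r_1}^{r_3}Q'(x)\,dx$, which conveniently collapses to a factored expression whose sign encodes exactly the trichotomy stated.

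First, since $f$ is a cubic with positive leading coefficient $4a$ and distinct real roots $r_1<r_2<r_3$, I would observe that $f<0$ on $(-\infty,r_1)\cup(r_2,r_3)$ and $f>0$ on $(r_1,r_2)\cup(r_3,\infty)$. Thus $Q$ is decreasing–increasing at $r_1$, increasing–decreasing at $r_2$, and decreasing–increasing at $r_3$, giving the claimed classification of $r_1,r_3$ as the only local minimizers and $r_2$ as the only local maximizer. Because $a>0$, we have $Q(x)\to+\infty$ as $|x|\to\infty$, so $Q$ attains its infimum; the infimum must occur at a local minimizer, hence at some point of $\{r_1,r_3\}$.

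Next, to compare $Q(r_1)$ and $Q(r_3)$, I would compute
\begin{equation*}
Q(r_3)-Q(r_1)=\int_{r_1}^{r_3} 4a(x-r_1)(x-r_2)(x-r_3)\,dx,
\end{equation*}
using the substitution $u=x-r_2$ with $\alpha:=r_2-r_1>0$ and $\beta:=r_3-r_2>0$. The integrand becomes $u^3+(\alpha-\beta)u^2-\alpha\beta\,u$, and the bounds become $-\alpha$ and $\beta$. Grouping the resulting three terms by the common factor $(\beta-\alpha)(\beta+\alpha)$ and expanding, a short calculation shows that the bracketed expression simplifies to $-(\alpha+\beta)^2$, yielding
\begin{equation*}
Q(r_3)-Q(r_1)=\frac{a\,(2r_2-r_1-r_3)\,(r_3-r_1)^3}{3}.
\end{equation*}

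Since $(r_3-r_1)^3>0$ and $a>0$, the sign of $Q(r_3)-Q(r_1)$ is precisely the sign of $2r_2-r_1-r_3$, which yields the three cases (i)–(iii) immediately. The reformulation "the global minimizer is whichever of $r_1,r_3$ is farthest from $r_2$" follows because $r_3-r_2>r_2-r_1$ is equivalent to $2r_2-r_1-r_3<0$, and symmetrically for the reverse inequality. The main obstacle is the algebraic simplification of the integral, but the cancellation into the clean product $(2r_2-r_1-r_3)(r_3-r_1)^3$ is the key identity that makes the statement transparent; no additional subtleties beyond this arise.
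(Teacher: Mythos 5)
Your proposal is correct and follows essentially the same route as the paper: a sign analysis of $f=Q'$ for the local classification, followed by integrating $Q'$ to obtain the key identity $Q(r_3)-Q(r_1)=\tfrac{a}{3}(2r_2-r_1-r_3)(r_3-r_1)^3$, which is exactly the paper's formula (written there as $\tfrac{1}{3}a(r_1-2r_2+r_3)(r_1-r_3)^3$). The only cosmetic difference is that you rule out $r_2$ as a global minimizer via coercivity of $Q$, whereas the paper does so by also computing $Q(r_2)-Q(r_1)=\int_{r_1}^{r_2}f$; both are valid.
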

\begin{proof}
Because $a>0$, we have: $f$ is negative on $\left]-\infty,r_1\right[$,
$f$ is positive on $\left]r_1,r_2\right[$, 
$f$ is negative on~$\left]r_2,r_3\right[$, 
and $f$ is positive on $\left]r_3,\infty\right[$. 
Since $Q'=f$, it follows that 
$r_1$ and $r_3$ are local minimizers of~$Q$, and $r_2$ is a local maximizers of $Q$,
and there are no other critical points of $Q$. 
We have 
\begin{equation*}
Q(r_2)-Q(r_1) = \int_{r_1}^{r_2}f(x)\,dx= \frac{1}{3} \, a {\left(r_{1} + r_{2} - 2 \, r_{3}\right)} {\left(r_{1} - r_{2}\right)}^{3}.
\end{equation*}
The assumption $r_1<r_2<r_3$ implies $Q(r_1)<Q(r_2)$ and similarly $Q(r_3)<Q(r_2)$. Likewise,
\begin{equation*}
Q(r_3)-Q(r_1)
=
\frac{1}{3} \, a {\left(r_{1} - 2 \, r_{2} + r_{3}\right)} {\left(r_{1} - r_{3}\right)}^{3},
\end{equation*}
which yields the rest of the result. 
\end{proof}

\section{Computing the Projection}

Since $Q$ in~\cref{e:Q} is a quartic polynomial, we can compute its critical points using the derivative. We obtain the cubic
\begin{equation}\label{e:f}
f(x) := Q'(x) = ax^3 + bx^2 + cx+ d,
\end{equation}
where
\begin{align*}
a &:= 4 \, \alpha^{2} > 0, \\
b &:= 6 \, \alpha \beta, \\
c &:= 2 \, \beta^{2} - 4 \, \alpha \py + 4 \, \alpha \gamma + 2, \\
d &:= -2 \, \beta \py + 2 \, \beta \gamma - 2 \, \px.
\end{align*}
Recalling \cref{l:Alicante1}, we are interested in the roots of $f$, as they provide the 
points to investigate to solve the original problem. 
Now, let us define the usual key quantities from \cref{sec:facts} associated with $f$: 
\begin{equation*}
x_0 := \frac{-b}{3a} = -\frac{\beta}{2 \, \alpha}
\end{equation*}
and 
\begin{equation*}
\label{e:240622a}
p := \frac{3ac-b^2}{3a^2}
\;\;\text{and}\;\; q := \frac{27a^2d+2b^3-9abc}{27a^3}. 
\end{equation*}
Thankfully, this simplifies to
\begin{equation}\label{e:p}
p = -\frac{\beta^{2} + 4 \, \alpha \py - 4 \, \alpha \gamma - 2}{4 \, \alpha^{2}}\;\;\text{and}\;\; q = -\frac{2 \, \alpha \px + \beta}{4 \, \alpha^{3}}.
\end{equation}

\begin{theorem}\label{t:main}
    Let $s(x)=\alpha x^2+\beta x+\gamma$ be a quadratic function with $\alpha\neq 0$. Let $\Delta := (p/3)^3+(q/2)^2$, where $p$ and $q$ are given by \cref{e:p}. Then, for any $(\px,\py)\notin S:=\gra(s)$, the projection onto $S$ is computed as
    \[P_S(\px,\py)=\begin{cases}
        (x_1,s(x_1)) & \text{if }p=0\text{ or }\Delta>0,\\
        (x_2,s(x_2)) & \text{if }p<0\text{ and }\Delta=0,\\
        \{(x^+_3,s(x^+_3)),(x^-_3,s(x^-_3))\} &\text{if }\Delta<0\text{ and }\px=-\frac{\beta}{2\alpha},\\
        (x_4,s(x_4)) &\text{if }\Delta<0\text{ and }q<0,\\
        (x_5,s(x_5)) &\text{if }\Delta<0\text{ and }q>0,\\
    \end{cases}\]
    where $x_1$, $x_2$, $x_3^+$, $x_3^-$, $x_4$, $x_5$ are defined as
    \begin{align*}
        x_1&:=-\frac{\beta}{2\alpha}+\sqrt[\mathlarger 3]{\frac{2\alpha\px+\beta}{8\alpha^3}-\sqrt{\Delta}}+\sqrt[\mathlarger 3]{\frac{2\alpha\px+\beta}{8\alpha^3}+ \sqrt{\Delta}},\\
        x_2&:=
-\frac{\beta}{2 \, \alpha} + 
\frac{3 \, {\left(2 \, \alpha \px + \beta\right)}}{{\left(\beta^{2} + 4 \, \alpha \py - 4 \, \alpha \gamma - 2\right)} \alpha},\\
x_3^\pm&:=-\frac{\beta}{2\alpha}\pm\sqrt{-p},\\
x_4&:=-\frac{\beta}{2\alpha}+2\sqrt{\frac{-p}{3}}\cos\big(\theta/3\big),\\
     x_5&:=-\frac{\beta}{2\alpha}+2\sqrt{\frac{-p}{3}}\cos\Big(\frac{\theta+2\pi}3\Big),
    \end{align*}
and $\theta := \arccos \frac{-q/2}{(-p/3)^{3/2}}$.
\end{theorem}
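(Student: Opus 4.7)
The plan is to reduce the projection problem to an application of \cref{c:genroots} and \cref{l:Alicante1}. Since the quartic $Q$ in \cref{e:Q} has positive leading coefficient $\alpha^2$, it is coercive, so its global minimizers must lie among the real roots of the cubic $f = Q'$ from \cref{e:f}. With the quantities $x_0 = -\beta/(2\alpha)$, $p$, $q$ already computed in \cref{e:p}, \cref{c:genroots} furnishes closed-form expressions for those roots; what remains is to identify, in each of its three cases, which root (or roots) actually realizes the global minimum.

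In the case $p=0$ or $\Delta>0$, $f$ has a single real root which, by coercivity of $Q$, is automatically the unique global minimizer; substituting $-q/2 = (2\alpha\px+\beta)/(8\alpha^3)$ into \cref{c:genroots}(i) yields $x_1$. In the case $p<0$ and $\Delta=0$, $f$ has a simple root $x_s$ and a double root $x_d$. At the double root $f$ does not change sign, so $Q$ is monotone on either side of $x_d$ and $x_d$ is an inflection point; hence $x_s$ is the only local (and, by coercivity, global) minimizer. Direct substitution in the formula $x_s = x_0 + 3q/p$ from \cref{c:genroots}(ii) produces $x_s = x_2$.

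The substantial case is $\Delta<0$, where \cref{l:Alicante1} applies to the three distinct roots of \cref{e:theta} (identifying $r_3$ in the lemma with $r_0$ there). The quantity to evaluate is $2r_2 - r_0 - r_1$; using the standard identity
\[
\cos(\theta/3) + \cos\!\big(\tfrac{\theta+2\pi}{3}\big) + \cos\!\big(\tfrac{\theta+4\pi}{3}\big) = 0,
\]
this simplifies to $6\sqrt{-p/3}\,\cos\!\big(\tfrac{\theta+4\pi}{3}\big)$. Since $\theta\in[0,\pi]$, the argument lies in $[4\pi/3,5\pi/3]$, where cosine is strictly increasing from $-\tfrac12$ to $\tfrac12$ and vanishes exactly at $\theta=\pi/2$. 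By \cref{e:p}, $\theta=\pi/2$ is equivalent to $q=0$, which in turn is equivalent to $\px = -\beta/(2\alpha)$; in that subcase $g(z) = z^3 + pz$ factors as $z(z^2+p)$, and the two global minimizers form the symmetric pair $x_3^\pm$. For $q<0$ the sign is negative and \cref{l:Alicante1}(ii) identifies $r_0 = x_4$ as the unique minimizer; for $q>0$ the sign is positive and \cref{l:Alicante1}(iii) identifies $r_1 = x_5$. The main technical burden is precisely this trigonometric bookkeeping in the case $\Delta<0$, together with the algebraic simplifications needed to match the closed forms of $x_1$ and $x_2$ stated in \cref{t:main}.
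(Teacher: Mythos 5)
Your proposal is correct and follows essentially the same route as the paper: reduce to the roots of the cubic $Q'$ via \cref{c:genroots}, dispose of the one-root and double-root cases directly, and in the case $\Delta<0$ use \cref{l:Alicante1} together with the identity $\cos(\theta/3)+\cos((\theta+2\pi)/3)+\cos((\theta+4\pi)/3)=0$ to reduce everything to the sign of $\cos((\theta+4\pi)/3)$, hence of $q$. Your only (harmless) deviations are determining that sign by monotonicity of cosine on $[4\pi/3,5\pi/3]$ instead of rewriting it as $\sin((\theta-\pi/2)/3)$, and obtaining $x_3^{\pm}$ by factoring the depressed cubic $z(z^2+p)$ rather than evaluating the trigonometric formulas at $\theta=\pi/2$.
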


\begin{proof}
Since the set of minimizers of $Q(x)$ in \cref{e:Q} is contained in the set of roots of the cubic $f(x)$ in~\cref{e:f}, the results introduced in \cref{sec:facts} are key to find the solutions of the optimization problem and, hence, the projections onto $S$. Thus, we will go through each case of \cref{c:genroots} to obtain an explicit formulation for the projection.

\noindent\fbox{Case (i): $p=0$ or $\Delta>0$.} By \cref{c:genroots}, $f$ has exactly one real root, which is given by
\begin{equation}
x_1 := -\frac{\beta}{2\alpha}+u_-+u_+,\quad
\text{where}\;\;
u_{\pm} := \sqrt[\mathlarger 3]{\frac{2\,\alpha\px+\beta}{8\alpha^3}\pm \sqrt{\Delta}}. 
\end{equation}
Thus, $Q$ has exactly one minimizer, as there are no other critical points, and the projection onto $S$ is given by
\begin{equation*}
P_S(\px,\py) = (x_1,s(x_1)).
\end{equation*}

\noindent\fbox{Case (ii): $p<0$ and $\Delta=0$.} In this case, $f$ has two distinct real roots: a double root $x_\text{d} := x_0-\frac{3q}{2p}$ and one simple root
\[x_\text{s}:= x_0+\frac{3q}{p}= 
-\frac{\beta}{2 \, \alpha} + 
\frac{3 \, {\left(2 \, \alpha \px + \beta\right)}}{{\left(\beta^{2} + 4 \, \alpha \py - 4 \, \alpha \gamma - 2\right)} \alpha}.\]
Recalling~\cref{e:f}, we have that $f(x)=4\alpha^2(x-x_\text{d})^2(x-x_\text{s})$. As the sign of $f$ only changes at $x_\text{s}$, the point $x_d$ cannot even be a local minimizer of $Q$. Therefore, $P_S(\px,\py)=(x_s,s(x_s))$.

\noindent\fbox{Case (iii): $\Delta<0$.} This is the case where $f$ has three simple real roots
$r_1<r_2<r_0$, whose expressions are given by  
\begin{equation}\label{e:rk}
r_k := x_0+2\sqrt{\frac{-p}{3}}\cos\Big(\frac{\theta+2k\pi}{3} \Big),
\end{equation}
with
\begin{equation}\label{e:theta2}
\theta := \arccos \frac{-q/2}{(-p/3)^{3/2}}=\arccos \frac{3^{3/2}\operatorname{sign}(\alpha)(2\,\alpha\px+\beta)}{(\beta^{2} + 4 \, \alpha \py - 4 \, \alpha \gamma - 2)^{3/2}}\in\left]0,\pi\right[. 
\end{equation}
We represent these three roots in \cref{fig:roots}.

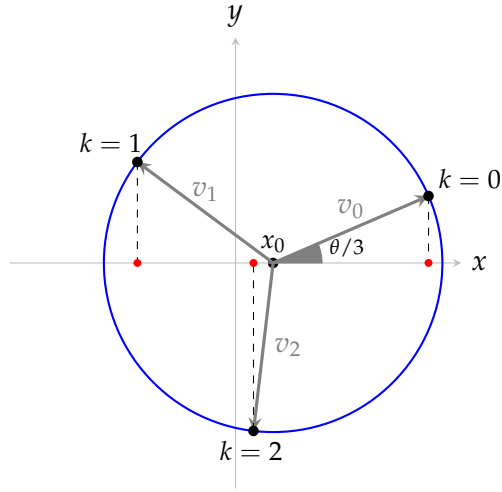
\begin{figure}[ht]
\centering
\begin{tikzpicture}[>=stealth]
    \def\x{0.5}
    \def\y{0}
    
    \coordinate (x0) at (\x,\y);
    \fill (x0) circle (2pt) node[above] {\small $x_0$};
    
    \def\R{2.25}
    \def\thetaval{70}
    
    \draw[->, gray!50] (-3,0) -- (3,0) node[right, black] {$x$};
    \draw[->, gray!50] (0,-3) -- (0,3) node[above, black] {$y$};
    
    \draw[thick, blue] (x0) circle (\R);
    \draw[fill,gray] (x0) -- ({\x+0.65},\y) arc [radius=0.65, start angle=0, end angle={\thetaval/3}] -- cycle;
    \node at ([shift={(x0)}]\thetaval/6:\R/2.25) {\scriptsize $\theta/3$};

    \foreach \k in {0,1} {
        \pgfmathsetmacro{\ang}{(\thetaval + \k*360)/3}
        
        \coordinate (P\k) at ([shift={(x0)}]\ang:\R);

        \draw[very thick, ->,gray] (x0) -- (P\k) node [midway,above]  {$v_\k$};)
        \draw[dashed] (P\k) -- ({\x+\R*cos(\ang)}, 0);
        \fill[red] ({\x+\R*cos(\ang)}, 0) circle (1.5pt);

        \fill (P\k) circle (2pt);
        
        \node[anchor=180+\ang] at (P\k) {\small $k=\k$};
    }
    \foreach \k in {2} {
        \pgfmathsetmacro{\ang}{(\thetaval + \k*360)/3}
        
        \coordinate (P\k) at ([shift={(x0)}]\ang:\R);

        \draw[very thick, ->,gray] (x0) -- (P\k) node [midway,below, right]  {$v_\k$};)
        \draw[dashed] (P\k) -- ({\x+\R*cos(\ang)}, 0);
        \fill[red] ({\x+\R*cos(\ang)}, 0) circle (1.5pt);

        \fill (P\k) circle (2pt);
        
        \node[anchor=180+\ang] at (P\k) {\small $k=\k$};
    }
\end{tikzpicture}
\caption{Geometric representation of the real roots $r_k$ (red dots) given in \cref{e:rk}}
\label{fig:roots}
\end{figure}

By \cref{l:Alicante1}, $r_0$ and $r_1$ are two local minimizers of $Q$,
and $r_2$ is a local maximizer. Moreover, the global minimizers of $Q$ correspond to the roots that are farthest away 
from the middle root $r_2$. Hence, we care about the sign of $2r_2 - r_0-r_1$.
Observing that the vectors $v_0$, $v_1$ and $v_2$ depicted in~\cref{fig:roots} add up to $0$, the sum of their $x$-coordinates satisfies 
$$\cos\Big(\frac{\theta}{3}\Big)+\cos\Big(\frac{\theta+2\pi}{3}\Big)+\cos\Big(\frac{\theta+4\pi}{3}\Big)=0.$$ 
Then, we obtain the following simplification:
\[\frac{2r_2 - r_0-r_1}{2(-p/3)^{1/2}}=3\cos\Big(\frac{\theta+4\pi}{3}\Big)=3\sin\Big(\frac{\theta-\pi/2}{3}\Big).\]
Hence, we need to check whether the above sine is zero, negative or positive. Recalling that $0<\theta<\pi$, checking the sign of the sine is equivalent to the following conditions, respectively: $\theta=\pi/2$, $0<\theta<\pi/2$ or $\pi/2<\theta<\pi$.

\noindent$\bullet$ The case $\theta=\pi/2$ is equivalent to $\frac{-q/2}{(-p/3)^{3/2}}=0$, i.e., $q=0$. Under this condition, the global minimizers of $Q$ are given by
\begin{align*}
    r_0&=x_0+2\sqrt{-p/3}\cos\Big(\frac{\pi}{6} \Big)=-\frac{\beta}{2\alpha}+\sqrt{-p}=:x_3^+,\\
    r_1&=x_0+2\sqrt{-p/3}\cos\Big(\frac{\pi/2+2\pi}{3} \Big)=
    -\frac{\beta}{2\alpha}-\sqrt{-p}=:x_3^-.
\end{align*}

\noindent$\bullet$ The condition $0<\theta<\pi/2$ is equivalent to $0 < (-q/2)/(-p/3)^{3/2}<1$. However, by the assumptions of this case, $\Delta<0$ and $p<0$, which directly implies that $(-q/2)/(-p/3)^{3/2}<1$ always holds. Hence, the chain of inequalities $0<\theta<\pi/2$ is characterized by just $0 < (-q/2)/(-p/3)^{3/2}$, namely, $q<0$. In this case, the unique global minimizer is
\[r_0=-\frac{\beta}{2\alpha}+2\sqrt{\frac{-p}{3}}\cos\Big(\frac{\theta}{3} \Big)=:x_4.\]

\noindent$\bullet$ Lastly, the condition {$\pi/2<\theta<\pi$} is equivalent to $0 > (-q/2)/(-p/3)^{3/2}>-1$ which, by similar arguments as before, is characterized by $q>0$. In this case, the unique global minimizer is
\[r_1=-\frac{\beta}{2\alpha}+2\sqrt{\frac{-p}{3}}\cos\Big(\frac{\theta+2\pi}{3} \Big)=:x_5.\]

Summing up, when $\Delta<0$, the projection onto $S$ is given by 
$$P_S(\px,\py)=\begin{cases}
    \{(x_3^+,s(x_3^+)),(x_3^-,s(x_3^-))\} & \text{if } q=0,\\
    (x_4,s(x_4)) &\text{if } q<0,\\
    (x_5,s(x_5)) &\text{if }q>0.
\end{cases}$$
This concludes the proof.
\end{proof}

The computation of the projection given in \cref{t:main} is divided into five disjoint cases, which follow those provided by \cref{c:genroots} and \cref{l:Alicante1}. However, these cases can be summarized into only three. 

\begin{corollary}\label{c:main}
    Let $s(x)=\alpha x^2+\beta x+\gamma$ be a quadratic function with $\alpha\neq 0$. Let $\Delta := (p/3)^3+(q/2)^2$, where $p$ and $q$ are given by \cref{e:p}. Then, for any $(\px,\py)\notin S:=\gra(s)$, the projection onto $S$ is given by
    \[P_S(\px,\py)=\begin{cases}
        (z_1,s(z_1)) & \text{if }\Delta\geq 0,\\
        \{(z^+_2,s(z^+_2)),(z^-_2,s(z^-_2))\} &\text{if }\Delta<0\text{ and }\px=-\frac{\beta}{2\alpha},\\
        (z_3,s(z_3)) &\text{if }\Delta<0\text{ and }\px\neq-\frac{\beta}{2\alpha}.
    \end{cases}\]
    where $z_1$, $z_2^+$, $z_2^-$, and $z_3$ are defined as
    \begin{align*}
        z_1&:=-\frac{\beta}{2\alpha}+\sqrt[\mathlarger 3]{\frac{2\alpha\px+\beta}{8\alpha^3}-\sqrt{\Delta}}+\sqrt[\mathlarger 3]{\frac{2\alpha\px+\beta}{8\alpha^3}+ \sqrt{\Delta}},\\
        z_2^\pm&:=-\frac{\beta}{2\alpha}\pm\sqrt{-p},\\
        z_3&:=-\frac{\beta}{2\alpha}-2\sign(q)\sqrt{-p/3}\cos(\vartheta/3), 
    \end{align*}
and $\vartheta:=\arccos\left|\frac{q/2}{(-p/3)^{3/2}}\right|$.
\end{corollary}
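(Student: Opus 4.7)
The plan is to show that \cref{c:main} follows from \cref{t:main} purely by algebraic/trigonometric simplification, after noting the key identity hidden in~\cref{e:p}, namely that
\[
q = -\frac{2\alpha\px + \beta}{4\alpha^{3}} = 0 \iff \px = -\frac{\beta}{2\alpha}.
\]
Thus the subcase of \cref{t:main} that yielded $\{x_{3}^{\pm}\}$ is exactly ``$\Delta<0$ and $q=0$'', while the other two $\Delta<0$ subcases ($q<0$ giving $x_{4}$, $q>0$ giving $x_{5}$) together comprise ``$\Delta<0$ and $\px\neq-\beta/(2\alpha)$''. So the second and third cases of \cref{c:main} match the theorem up to the claim that $z_{3}$ encodes both $x_{4}$ and $x_{5}$; meanwhile the first case requires merging the $(p=0\text{ or }\Delta>0)$ branch with the $(p<0,\ \Delta=0)$ branch.

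First I would dispatch the merger $\{x_{1},x_{2}\}\rightsquigarrow z_{1}$. If $p=0$ or $\Delta>0$ then $z_{1}$ is literally $x_{1}$. If $p<0$ and $\Delta=0$, then $\sqrt{\Delta}=0$, so the two cube roots in the definition of $z_{1}$ both equal $\sqrt[3]{(2\alpha\px+\beta)/(8\alpha^{3})}=\sqrt[3]{-q/2}$, giving $z_{1}=x_{0}+2\sqrt[3]{-q/2}$. By the alternative form of the simple root provided in \cref{c:genroots}\cref{c:genroots2}, this coincides with $x_{\text{s}}=x_{2}$. Hence $z_{1}$ correctly parametrizes both subcases of $\Delta\ge 0$.

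Next I would verify $z_{2}^{\pm}=x_{3}^{\pm}$, which is immediate from the formulas. The main obstacle is the merger $\{x_{4},x_{5}\}\rightsquigarrow z_{3}$, i.e., showing that
\[
z_{3} = -\tfrac{\beta}{2\alpha}-2\sign(q)\sqrt{-p/3}\,\cos(\vartheta/3),\qquad \vartheta=\arccos\Big|\tfrac{q/2}{(-p/3)^{3/2}}\Big|,
\]
agrees with $x_{4}$ when $q<0$ and with $x_{5}$ when $q>0$. For $q<0$, the quantity $-q/2$ is positive, so $\theta\in(0,\pi/2)$ and $\vartheta=\theta$; together with $\sign(q)=-1$, this immediately gives $z_{3}=x_{0}+2\sqrt{-p/3}\cos(\theta/3)=x_{4}$. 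For $q>0$, we have $-q/2<0$, so $\theta\in(\pi/2,\pi)$, and $\vartheta=\pi-\theta$. The trigonometric identity
\[
\cos\Big(\tfrac{\theta+2\pi}{3}\Big)
= \cos\Big(\tfrac{\pi-\vartheta+2\pi}{3}\Big)
= \cos\Big(\pi-\tfrac{\vartheta}{3}\Big)
= -\cos(\vartheta/3)
\]
then yields $x_{5}=x_{0}-2\sqrt{-p/3}\cos(\vartheta/3)=z_{3}$ since $\sign(q)=+1$. This identity is the only genuinely nontrivial step in the proof; everything else is bookkeeping between the cases of \cref{t:main} and those of the corollary.
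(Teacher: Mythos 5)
Your proposal is correct and follows essentially the same route as the paper: identify $q=0$ with $\px=-\beta/(2\alpha)$, collapse the $\Delta\geq 0$ branches by noting that $\sqrt{\Delta}=0$ makes $z_1$ reduce to the simple root $x_{\mathrm{s}}=x_0+2\sqrt[3]{-q/2}$, and reconcile $x_4$ and $x_5$ with $z_3$ via the relation $\vartheta=\theta$ for $q<0$ and $\vartheta=\pi-\theta$ for $q>0$ together with $\cos\big(\pi-\tfrac{\vartheta}{3}\big)=-\cos(\vartheta/3)$. The paper phrases the last step through the auxiliary quantity $w=\tfrac{-q/2}{(-p/3)^{3/2}}$ rather than substituting $\theta=\pi-\vartheta$ directly, but the computation is identical in substance.
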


\begin{proof}
This is a consequence of combining the cases for the projection in~\cref{t:main}. Indeed, observe first that when $\Delta=0$, we have that $p\leq0$. When $p=0$, the projection is given by $(x_1,s(x_1))$. Otherwise, we are in case (ii) of \cref{c:genroots}, and 
since $\Delta=0$,
$$x_2=x_\text{s}=x_0+\frac{3q}{p}= x_0+2\sqrt[\mathlarger 3]{\frac{-q}{2}}=\sqrt[\mathlarger 3]{\frac{-q}{2}+ \sqrt{\Delta}}+\sqrt[\mathlarger 3]{\frac{-q}{2}-\sqrt{\Delta}}=x_1.$$ 
Hence, both cases can be subsumed into the combined region $\Delta\geq 0$, in which the projection is given by $(z_1,s(z_1))$, with $z_1:=x_1$. 

Consider now the case where $\Delta<0$ and $q\neq 0$. Let us show that the expression of $x_4$ and $x_5$ can be combined as $z_3:=x_0-2\sign(q)\sqrt{-p/3}\cos(\vartheta/3)$, where $\vartheta:=\arccos\left|\frac{q/2}{(-p/3)^{3/2}}\right|$. To prove this, let $w:=\frac{-q/2}{(-p/3)^{3/2}}$. On the one hand, when $q>0$, one has 
\begin{align*}
    \cos\Big(\frac{\vartheta}{3}\Big)&=\cos\Big(\frac{\arccos(-w)}{3}\Big)=\cos\left(\frac{\pi-\arccos(w)}{3}\right)
    =-\cos\Big(\frac{\theta+2\pi}{3}\Big),
\end{align*}
whence, $z_3=x_0+2\sqrt{-p/3}\cos\big((\theta+2\pi)/3\big)=x_5$.
On the other hand, if $q<0$, the value of $\theta$ in \cref{t:main} coincides with that of $\vartheta$, so $z_3=x_4$ (since $\sign(q)=-1$). 
\end{proof}

\cref{fig:regions} illustrates for the parabola $s(x)=2x^2+x-1$ the three regions derived from the previous analysis.

\begin{figure}[ht]
\centering
\begin{tikzpicture}

    \pgfmathsetmacro{\A}{2}
    \pgfmathsetmacro{\B}{1}
    \pgfmathsetmacro{\C}{-1}
    \pgfmathsetmacro{\Z}{-\B/(2*\A)}
    \pgfmathsetmacro{\xmin}{-1.7}
    \pgfmathsetmacro{\xmax}{1.3}
    \pgfmathsetmacro{\ymin}{-1.5}
    \pgfmathsetmacro{\ymax}{0.6}
    
    \pgfmathsetmacro{\discTerm}{4*\A*\C - \B*\B + 2}
    \pgfmathsetmacro{\Ystart}{\discTerm / (4*\A)}

    \pgfmathdeclarefunction{s}{1}{\pgfmathparse{\A*#1^2 + \B*#1 + \C}}
    \pgfmathdeclarefunction{Delta}{1}{\pgfmathparse{(3*(abs(2*\A*#1 + \B))^(2/3) + \discTerm) / (4*\A)}}

    \begin{axis}[
        xmin=\xmin, xmax=\xmax,
        ymin=\ymin, ymax=\ymax,
        xtick distance=0.4,
        ytick distance=0.4,
        axis lines=middle,
        samples=500, 
        domain=\xmin:\xmax,
        clip=true,
        font=\small,
        legend pos=south east,
        width=0.85\textwidth,
        height=0.6\textwidth,
        legend cell align={left}]


    \addplot[color=mycolor1, line width=2pt, name path=parabola]{s(x)};
    \addlegendentry{$s(x)=2x^2+x-1$}


    \path[name path=ymin_path] (\xmin,\ymin) -- (\xmax,\ymin);
    \path[name path=ymax_path] (\xmin,\ymax) -- (\xmax,\ymax);
    \addplot[draw=none, name path=secondcurve, forget plot] {Delta(x)};


    \addplot[draw=none, color=mycolor3, opacity=0.35,legend image post style={draw=mycolor3!60!white,line width=1pt, opacity=1, fill opacity=0.35}] fill between [of=secondcurve and ymin_path, soft clip={domain=\xmin:\xmax}];
    
    \addlegendentry{$\Delta\geq0$}

    
    \addplot[color=red, line width=1pt] coordinates {(\Z, \Ystart+0.025) (\Z, \ymax)};
    \addlegendentry{$\Delta<0$ and $\px=-\frac{\beta}{2\alpha}$}

    
    \addplot[draw=none, fill, opacity=0.35,color=yellow] fill between[of=secondcurve and ymax_path,soft clip={domain=\xmin:\xmax}];
    \addlegendentry{$\Delta <0$ and $\px\neq-\frac{\beta}{2\alpha}$}

    \addplot[color=red, line width=1pt, forget plot] coordinates {(\Z, \Ystart+0.025) (\Z, \ymax)};
    \addplot[color=mycolor3!60!white, line width=1pt, name path=secondcurve, forget plot] {Delta(x)};


    \addplot[color=mycolor1, line width=2pt, name path=parabola, forget plot]{s(x)};

\end{axis}
\end{tikzpicture}
\caption{The three regions defining the projection for a particular parabola}
\label{fig:regions}
\end{figure}

\begin{remark}[Connection with \cite{chou}]\label{remark}
    Let us consider the parabola $4\mathfrak{c}y=x^2$, with $\mathfrak{c}\neq0$. If $(\px,\py)\in\RR^2$ is the point to be projected, then by~\cref{e:p} we have $p=4\mathfrak{c}(2\mathfrak{c}-\py)$ and $q=-8\mathfrak{c}^2\px$. The projection given in \cite[Equation (5)]{chou} is obtained by finding the roots of \cite[Equation (4)]{chou}:
    \[x^2=4\mathfrak{c}\Big(\py+\frac{2\mathfrak{c}}{x}(\px-x)\Big).\]
    At first glance, this equation already imposes the condition $x\neq0$, meaning that the $x$-coordinate of the projection point cannot be zero. This removes the case where $\px$ lies on the axis of symmetry, i.e., $\px\neq x_0$, and in particular excludes the case $P_S(\px,\py)=\{(z_2^\pm,s(z_2^\pm))\}$ from the final formulation. Continuing with this assumption, the previous equation turns into the following depressed cubic equation:
    \[x^3+4\mathfrak{c}(2c-\py)x-8\mathfrak{c}^2\px=x^3+px+q=0.\]
    The author claims that this equation has three different roots, one real and two complex conjugates. However, according to \cref{c:genroots}, this only occurs when $\Delta:=(p/3)^3+(q/2)^2>0$. Indeed, this premise is reflected in \cite[Equation (5)]{chou} by the computation $\sqrt{46656\mathfrak{c}^4\px^2+4(3p)^3}=3^3\cdot 2\sqrt{\Delta}$. Consequently, the case $P_S(\px,\py)=(z_3,s(z_3))$ from \cref{c:main} is also omitted.
\end{remark}

\section{The projection of a point lying on the axis of symmetry}

Let us now focus on the case $q=0$. Using \cref{e:p}, this corresponds to the choice
\begin{equation}\label{e:xbar}
    \px=x_0=-\frac{\beta}{2\alpha},
\end{equation}
that is, the point $(\px,\py)$ lies on the vertical line $x=x_0$, which is the $x$-coordinate of the parabola vertex. The $y$-coordinate of the vertex can be easily computed as
\begin{equation}\label{e:s(xbar)}
    s(x_0)=\alpha\Big(\frac{-\beta}{2\alpha}\Big)^2+\beta\Big(\frac{-\beta}{2\alpha}\Big)+\gamma=\gamma-\frac{\beta^2}{4\alpha}.
\end{equation}
Recalling the definition of $\Delta$, under the assumption $q=0$ we get $\Delta=(p/3)^3$. Thus, $\Delta\geq0$ is equivalent to $p\geq0$ and, likewise, $\Delta<0$ is equivalent to $p<0$. By \cref{c:main}, the following conditions can only occur when $q=0$:
\begin{itemize}
    \item If $p\geq0$, then $P_S(x_0,\py)=(z_1,s(z_1))$ is single-valued.
    \item If $p<0$, then $P_S(x_0,\py)=\{(z^+_2,s(z^+_2)),(z^-_2,s(z^-_2))\}$ is multi-valued.
\end{itemize}
We are interested in the value $p=0$, the frontier between single- and multi-valued projections. From \cref{e:p}, we solve for $\py$ to obtain
\begin{equation}\label{e:ybar}
\py=y_0:=\frac{4\alpha\gamma-\beta^2+2}{4\alpha}.
\end{equation}
Finally, we recall that the \emph{focus} of the parabola is the point
\begin{equation}\label{e:focus}
    F:= (F_x,F_y) := \Big(x_0,s(x_0)+\frac1{4\alpha}\Big),
\end{equation}
while the \emph{directrix} is the line $\RR\times\{\gamma -(\beta^2+1)/(4\alpha)\} 
= \RR\times \{s(x_0) - 1/(4\alpha)\}$. 

We illustrate in \cref{fig:2} the projection depending on the position of the point $(\px,\py)$ with respect to $(x_0,y_0)$.
\begin{figure}[ht!]
    \centering
    \label{fig:placeholder}
\begin{subfigure}{0.3\textwidth}
\centering
\begin{tikzpicture}
  \draw[-] (-2,0) -- (2,0);
  \draw[-] (0,-0.5) -- (0,3);
  \draw[domain=-2:2, smooth, variable=\x, mycolor1, line width=1pt] plot ({\x},{0.5*\x*\x+0.5});
  \draw [dashed] (0,1.2) circle [radius=0.7];
  \draw[dashed,lightgray] (0,0.5) -- (0,1.2);
  \draw[fill,OliveGreen] (0,1.5) +(-0.06,-0.06) rectangle +(0.06,0.06);
  \draw[fill,red] (0,1) circle [radius=0.05];
  \draw[fill] (0,.5) circle [radius=0.05];
  
  \draw (0, 1.2) node[cross=3.5pt, line width=1pt] {};
   
\end{tikzpicture}
\caption{$p>0\Leftrightarrow \py<y_0$}
\end{subfigure}
\begin{subfigure}{0.3\textwidth}
\centering
\begin{tikzpicture}
  \draw[-] (-2,0) -- (2,0);
  \draw[-] (0,-0.5) -- (0,3);
  \draw[domain=-2:2, smooth, variable=\x, mycolor1, line width=1pt] plot ({\x},{0.5*\x*\x+0.5});
  \draw[dashed] (0,1.5) circle [radius=1];

  \draw[dashed,lightgray] (0,0.5) -- (0,1.5);
  \draw[fill,OliveGreen] (0,1.5) +(-0.06,-0.06) rectangle +(0.06,0.06);
  \draw[fill,red] (0,1) circle [radius=0.05];
  \draw[fill] (0,.5) circle [radius=0.05];
  \draw (0, 1.5) node[cross=3.5pt, line width=1pt] {};
   
\end{tikzpicture}
\caption{$p=0\Leftrightarrow \py=y_0$}
\end{subfigure}
\begin{subfigure}{0.3\textwidth}
\centering
\begin{tikzpicture}
  \draw[-] (-2,0) -- (2,0);
  \draw[-] (0,-0.5) -- (0,3);
  \draw[domain=-2:2, smooth, variable=\x, mycolor1, line width=1pt] plot ({\x},{0.5*\x*\x+0.5});
  \draw[fill,OliveGreen] (0,1.5) +(-0.06,-0.06) rectangle +(0.06,0.06);
  \draw[fill,red] (0,1) circle [radius=0.05];
  \draw[dashed] (0,2.28125) circle [radius=1.600781];
  \draw[dashed,lightgray] (-1.25,1.28125) -- (0,2.28125) -- (1.25,1.28125);
   
  \draw (0, 2.28125) node[cross=3.5pt, line width=1pt] {};
    
  \draw[fill] (-1.25,1.28125) circle [radius=0.05];
  \draw[fill] (1.25,1.28125) circle [radius=0.05];
\end{tikzpicture}
\caption{$p<0\Leftrightarrow \py>y_0$}
\end{subfigure}
\caption{Projection 
(represented by black dot(s)) onto a parabola (in blue) of a point
$(\px,\py)$ (black cross) lying on the axis of symmetry, depending on its position with respect to the point $(x_0,y_0)$ (green square). The focus $F$ is represented by a red dot, 
and the directrix is the horizontal axis.}\label{fig:2}
\end{figure}
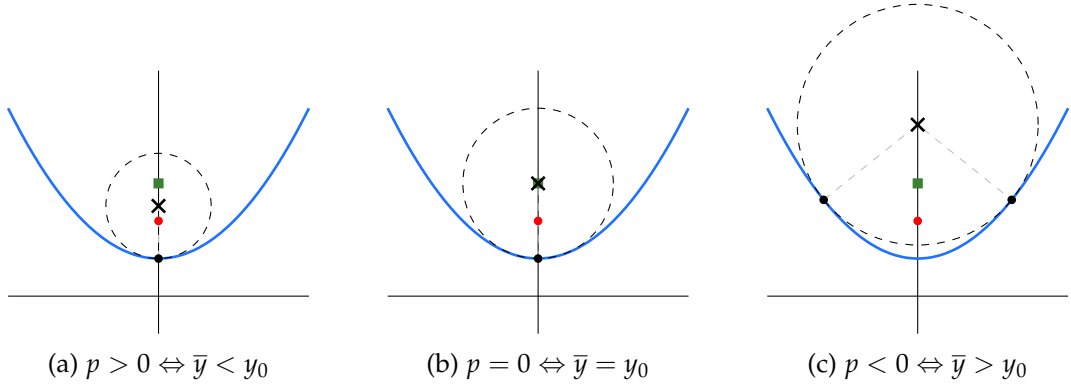

\begin{proposition}
    Let $(x_0,y_0)$ be as in \cref{e:xbar} and \cref{e:ybar}. Then
    \begin{enumerate}
        \item The point $(x_0,y_0)$ is the reflection of the vertex $(x_0,s(x_0))$ across the focus; in particular, $y_0=2F_y-s(x_0)$.
        \item $|y_0-s(x_0)|$ is the radius of curvature of $S$ at $(x_0,y_0)$.
    \end{enumerate}
\end{proposition}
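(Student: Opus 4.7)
The plan is to reduce both claims to a single direct computation of $y_0-s(x_0)$, which bridges the algebraic formulas (\ref{e:xbar})--(\ref{e:ybar}) with the geometric quantities (\ref{e:focus}) and the curvature of $s$.

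First, I would substitute the explicit expressions for $y_0$ from (\ref{e:ybar}) and $s(x_0)$ from (\ref{e:s(xbar)}) and simplify:
\begin{equation*}
y_0 - s(x_0) = \frac{4\alpha\gamma - \beta^2 + 2}{4\alpha} - \left(\gamma - \frac{\beta^2}{4\alpha}\right) = \frac{1}{2\alpha}.
\end{equation*}
This single identity drives everything that follows.

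For part (i), note that $F=(x_0, F_y)$ with $F_y = s(x_0)+\tfrac{1}{4\alpha}$ by (\ref{e:focus}), so $F_y - s(x_0) = \tfrac{1}{4\alpha}$. Combining with $y_0 - s(x_0) = \tfrac{1}{2\alpha}$, I get $y_0 - F_y = \tfrac{1}{4\alpha} = F_y - s(x_0)$, i.e., $F_y$ is the midpoint of $s(x_0)$ and $y_0$. Since all three points $(x_0, s(x_0))$, $F$, and $(x_0, y_0)$ share the first coordinate $x_0$, this immediately gives that $(x_0,y_0)$ is the reflection of the vertex through the focus, and in particular $y_0 = 2F_y - s(x_0)$.

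For part (ii), I apply the standard radius-of-curvature formula $R(x) = (1+s'(x)^2)^{3/2}/|s''(x)|$ at the vertex $x_0 = -\beta/(2\alpha)$. Since $s'(x_0) = 2\alpha x_0 + \beta = 0$ and $s''(x_0) = 2\alpha$, this collapses to $R(x_0) = 1/(2|\alpha|)$, which matches $|y_0 - s(x_0)| = 1/(2|\alpha|)$ from the computation above. (I read ``at $(x_0,y_0)$'' as referring to the curvature at the vertex, since $(x_0,y_0)\notin S$; this is where curvature is evaluated along the axis of symmetry.) No step is a real obstacle here, since both claims follow from the one-line simplification of $y_0 - s(x_0)$; the only care needed is in reading off the geometric interpretation of $F_y$ as the midpoint.
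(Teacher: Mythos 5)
Your proof is correct and follows essentially the same route as the paper: a one-line algebraic comparison of \cref{e:s(xbar)}, \cref{e:ybar} and \cref{e:focus} for part (i), and the standard formula $(1+s'(x_0)^2)^{3/2}/|s''(x_0)| = 1/(2|\alpha|)$ evaluated at the vertex for part (ii). Your reading of ``at $(x_0,y_0)$'' as curvature at the vertex matches what the paper actually computes.
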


\begin{proof}
    The result is clear for the $x$-component. 
    For the $y$-component, we just gather \cref{e:s(xbar)}, \cref{e:ybar} and \cref{e:focus}.
    \[2F_y-s(x_0)=s(x_0)+\frac1{2\alpha}=\gamma-\frac{\beta^2}{4\alpha}+\frac1{2\alpha}=\frac{4\alpha\gamma-\beta^2+2}{4\alpha}=y_0.\]
    Now, since $S$ is given by the function $s(x)$, we compute the radius of curvature at $x_0$ as
    \[\left|\frac{(1+s'(x_0)^2)^{3/2}}{s''(x_0)}\right|=\left|\frac{(1+0)^{3/2}}{2\alpha}\right|=\frac{1}{2|\alpha|}.\]
    Lastly, observe that $y_0-s(x_0)=2(F_y-s(x_0))=1/(2\alpha)$. 
    This completes the proof. 
\end{proof}

\section{Projection onto a higher dimensional parabola}\label{sec:higher}

Let us consider the function 
\begin{empheq}[box=\mybluebox]{equation*}
t:\RR^n\to\RR: \mathbf{x}\mapsto \alpha\|\mathbf{x}\|^2,\quad\text{where}\;\;
\alpha\neq 0.
\end{empheq}
An expression for the projection onto its graph
\begin{empheq}[box=\mybluebox]{equation*}
T := \gra(t) := \menge{(\mathbf{x},\alpha \|\mathbf{x}\|^2)}{\mathbf{x}\in\RR^n}
\end{empheq}
is provided next.
\begin{theorem}
    Let $t(\mathbf{x})=\alpha \|\mathbf{x}\|^2$  for $\mathbf{x}\in\RR^n$, with $\alpha>0$. Consider any $(\mathbf{\px},\py)\notin T:=\gra(t)$ and let 
    $$\Delta := \frac{(1-2\alpha\py)^3}{216\alpha^6}+\frac{\|\mathbf{\px}\|^2}{16\alpha^4}.$$
    Then the projection of $(\mathbf{\px},\py)$ onto $T$ is given by
    \[P_T(\mathbf{\px},\py)=\begin{cases}
        (\mathbf{0},0) &\text{if }\mathbf{\px}=\mathbf{0}\text{ and } \Delta\geq 0,\\
        \menge{(\mathbf{w},t(\mathbf{w}))}{\|\mathbf{w}\|=\sqrt{(1/\alpha)\py-1/(2\alpha^2)}}&\text{if } \mathbf{\px}=\mathbf{0}\text{ and } \Delta< 0,\\
        (\mathbf{w}_1,t(\mathbf{w}_1)) & \text{if }\mathbf{\px}\neq \mathbf{0}\text{ and }\Delta\geq 0,\\
        (\mathbf{w}_2,t(\mathbf{w}_2)) &\text{if }\mathbf{\px}\neq \mathbf{0}\text{ and }\Delta<0,
    \end{cases}\]
    where $\mathbf{w}_i:=(w_i/\|\mathbf{\px}\|)\mathbf{\px}$, with
    \begin{align*}
        w_1&:=\sqrt[\mathlarger 3]{\frac{\|\mathbf{\px}\|}{4\alpha^2}-\sqrt{\Delta}}+\sqrt[\mathlarger 3]{\frac{\|\mathbf{\px}\|}{4\alpha^2}+ \sqrt{\Delta}},\\
        w_2&:=2\sqrt{\frac{2\alpha\py-1}{6\alpha^2}}\cos(\vartheta/3), 
    \end{align*}
and $\vartheta:=\arccos\big(\frac{3\sqrt{6}}{2}\alpha\|\mathbf{\px}\|/(2\alpha\py-1)^{3/2}\big)$.
\end{theorem}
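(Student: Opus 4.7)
The plan is to reduce the $n$-dimensional projection problem to the one-dimensional one already solved in Corollary~\ref{c:main}, by exploiting the rotational symmetry of the objective around the line spanned by $\mathbf{\px}$. Fixing $\rho := \|\mathbf{x}\|$ and then optimizing over the direction of $\mathbf{x}$, the Cauchy--Schwarz inequality gives
\[
\|\mathbf{x}-\mathbf{\px}\|^2 \;=\; \rho^2 - 2\langle\mathbf{x},\mathbf{\px}\rangle + \|\mathbf{\px}\|^2 \;\geq\; \bigl(\rho-\|\mathbf{\px}\|\bigr)^2,
\]
with equality iff $\mathbf{x}=(\rho/\|\mathbf{\px}\|)\mathbf{\px}$ when $\mathbf{\px}\neq\mathbf{0}$, and for every $\mathbf{x}$ of norm $\rho$ when $\mathbf{\px}=\mathbf{0}$. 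Minimizing the squared distance from $(\mathbf{\px},\py)$ to $T$ therefore reduces to minimizing $Q^*(\rho):=(\rho-\|\mathbf{\px}\|)^2+(\alpha\rho^2-\py)^2$ over $\rho\geq 0$, with the optimal $\mathbf{x}$ recovered from the direction prescription above.

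For $\mathbf{\px}\neq\mathbf{0}$, a sign comparison yields $Q^*(-\rho)-Q^*(\rho)=4\rho\|\mathbf{\px}\|>0$ for every $\rho>0$, so every global minimizer of $Q^*$ over $\RR$ is automatically nonnegative; the constraint $\rho\geq 0$ can therefore be dropped, and the reduced problem becomes exactly the one-dimensional projection onto $\gra(s)$ for $s(x)=\alpha x^2$ from the point $(\|\mathbf{\px}\|,\py)$. Applying Corollary~\ref{c:main} with $\beta=\gamma=0$ and $\px=\|\mathbf{\px}\|$ gives $p=(1-2\alpha\py)/(2\alpha^2)$ and $q=-\|\mathbf{\px}\|/(2\alpha^2)$, so the discriminant matches the $\Delta$ in the statement. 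Because $\|\mathbf{\px}\|>0$ forces $q<0$ and hence $\sign(q)=-1$, the formulas $z_1$ and $z_3$ of the corollary specialize respectively to $w_1$ and $w_2$ after expanding $(-p/3)^{3/2}=(2\alpha\py-1)^{3/2}/(6\sqrt{6}\,\alpha^3)$; the $n$-dimensional projection is then recovered by lifting the optimal $\rho=w_i$ back to $\mathbf{w}_i=(w_i/\|\mathbf{\px}\|)\mathbf{\px}$.

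For $\mathbf{\px}=\mathbf{0}$, the rotational symmetry is complete and I would instead optimize directly in $r=\rho^2\geq 0$: the stationary point of $r+(\alpha r-\py)^2$ is $r^*=\py/\alpha-1/(2\alpha^2)$, which is admissible iff $2\alpha\py\geq 1$. Since $q=0$ here gives $\Delta=(p/3)^3$ and hence $\sign\Delta=\sign p$, the condition $\Delta\geq 0$ is equivalent to $2\alpha\py\leq 1$, which forces $r^*\leq 0$ and yields the unique projection $(\mathbf{0},0)$, whereas $\Delta<0$ makes $r^*>0$ and every $\mathbf{x}$ of norm $\sqrt{\py/\alpha-1/(2\alpha^2)}$ is a minimizer, producing the sphere in the statement. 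The main obstacle is not conceptual but purely algebraic bookkeeping: patiently rewriting the cube-root and arccosine expressions of Corollary~\ref{c:main} under the substitution $\beta=\gamma=0$, $\px=\|\mathbf{\px}\|$ into the stated formulas involving only $\alpha$, $\py$, and $\|\mathbf{\px}\|$.
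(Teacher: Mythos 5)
Your proposal is correct and takes essentially the same route as the paper's proof: split off the case $\mathbf{\px}=\mathbf{0}$, and otherwise reduce to the planar projection of $(\|\mathbf{\px}\|,\py)$ onto the graph of $\alpha x^2$ and invoke \cref{c:main} with $\beta=\gamma=0$, where $\|\mathbf{\px}\|>0$ forces $q<0$. The only difference is cosmetic: you derive collinearity of the minimizer with $\mathbf{\px}$ via Cauchy--Schwarz at fixed norm, which obliges you to rule out negative $\rho$, whereas the paper reads the same collinearity off the stationarity condition $\nabla g(\mathbf{w})=\mathbf{0}$ and lets the signed scalar $\mu$ range over all of $\RR$.
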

\begin{proof}
To compute the projection, we must solve the optimization problem
\begin{equation*}
    \begin{aligned}
    \text{Min }\;&\|(\mathbf{x},\alpha \|\mathbf{x}\|^2)-(\mathbf{\px},\py)\|\\
    \text{s.t. }\;&\mathbf{x}\in\RR^n.
\end{aligned}
\end{equation*}
Thus, we are interested in minimizing the function
$$
    g(\textbf{x}):=\|(\mathbf{x},\alpha\|\mathbf{x}\|^2)-(\mathbf{\px},\py)\|^2=\|\mathbf{x}-\mathbf{\px}\|^2+(\alpha\|\mathbf{x}\|^2-\py)^2.
$$
A minimizer $\mathbf{w}\in\RR^n$ of $g$ must satisfy
$$
    \nabla g(\textbf{w})=2(\mathbf{w}-\mathbf{\px})+4\alpha(\alpha\|\mathbf{w}\|^2-\py)\mathbf{w}=\mathbf{0},
$$
that is, 
\begin{equation}\label{e:collinear}
    (2\alpha^2\|\mathbf{w}\|^2+1-2\alpha\py)\mathbf{w}=\mathbf{\px}.
\end{equation} 
Let us consider two cases:

\noindent$\bullet$ If $\mathbf{\px}= \mathbf{0}$, then either $\mathbf{w}=\mathbf{0}$ or $2\alpha^2\|\mathbf{w}\|^2+1-2\alpha\py=0$. The latter case gives the equation $\|\textbf{w}\|^2=(1/\alpha)\py-1/(2\alpha^2)$, which requires $2\alpha\py-1\geq 0$ (or, equivalently, $\Delta\leq 0$). Since it holds that $g(\textbf{w})=\py/\alpha-1/(4\alpha^2)\leq \py^2=g(\mathbf{0})$, the minimum of the function $g$ is attained at any point $\mathbf{w}\in\RR^n$ such that $\|\textbf{w}\|=\sqrt{(1/\alpha)\py-1/(2\alpha^2)}$, whenever $\Delta< 0$, and at $(\mathbf{0},0)$ otherwise.

\noindent$\bullet$ If $\mathbf{\px}\neq 0$, by~\cref{e:collinear} we can write $\mathbf{w}=\mu\frac{\mathbf{\px}}{\|\mathbf{\px}\|}$. Thus, we are interested in finding the value of $\mu$ that minimizes 
$$g\Big(\mu\frac{\mathbf{\px}}{\|\mathbf{\px}\|}\Big)=(\mu-\|\mathbf{\px}\|)^2+(\alpha\mu^2-\py)^2,$$
which corresponds to the problem of finding the projection of the point $(\|\mathbf{\px}\|,\py)\in\RR^2$ onto the parabola $s(x)=\alpha x^2$. This can be computed using \cref{c:main}, from where the expressions follow.
\end{proof}

In \cref{fig:parabola3D} we illustrate the projection onto a parabola for $n=2$. On the left, the black point has a multi-valued projection (the red circle). On the right, the projection consists of a unique (red) point. 

\begin{figure}[ht!]
\hfill
\includegraphics[width=.38\textwidth]{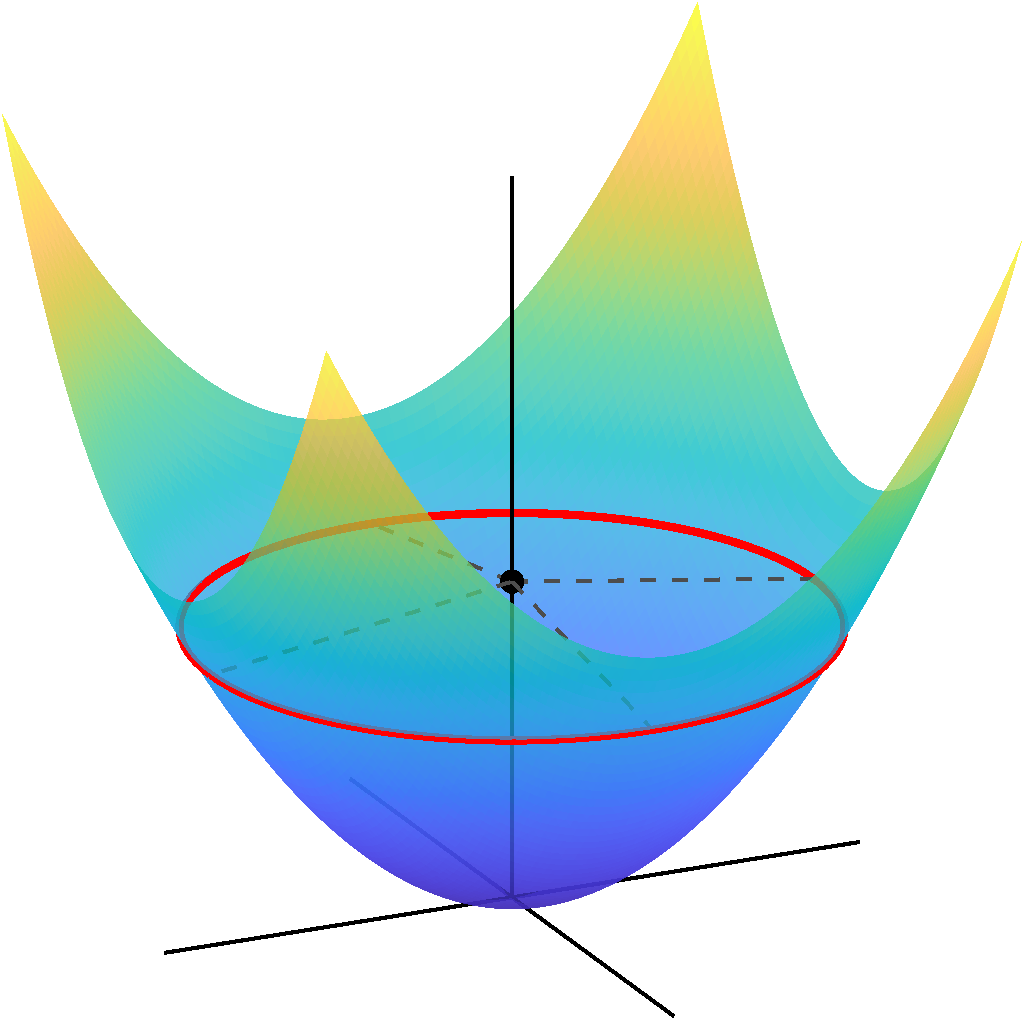}\hfill~
\includegraphics[width=.38\textwidth]{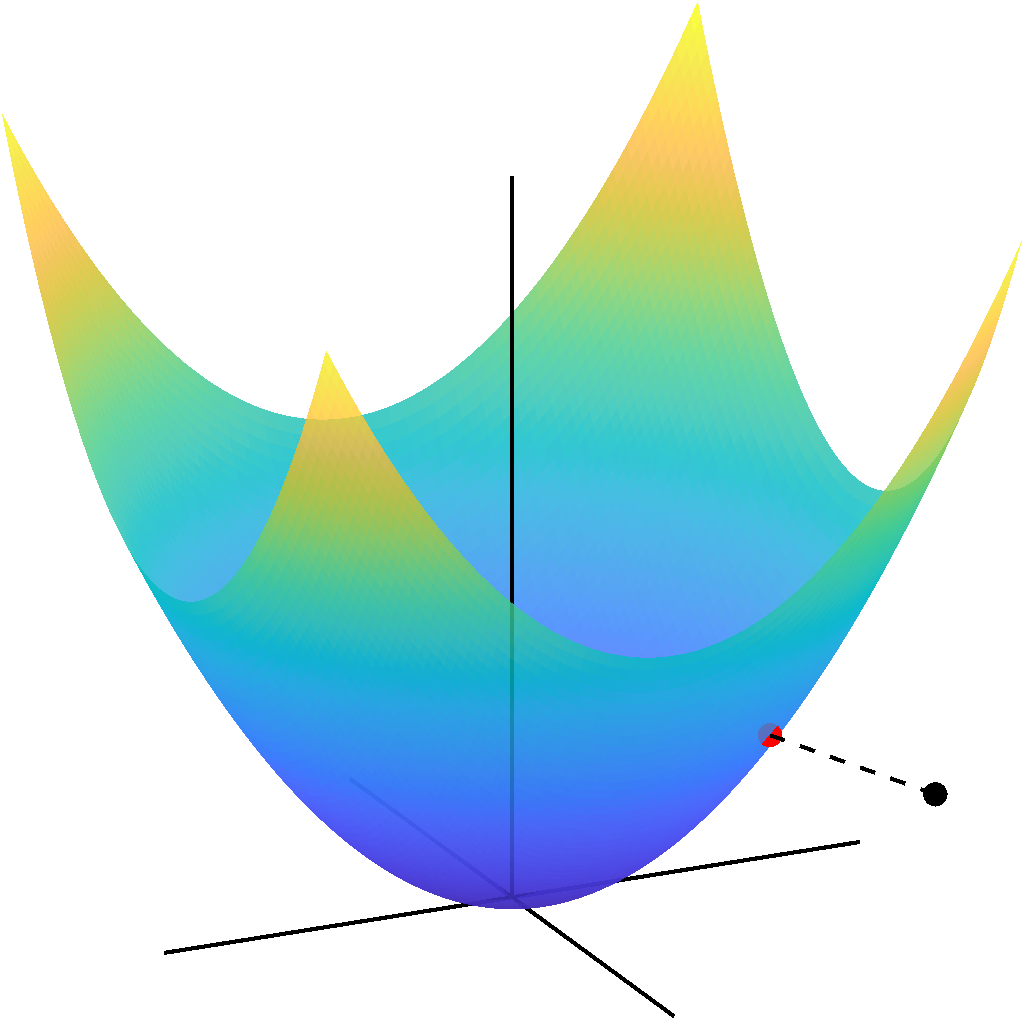}\hfill~
\caption{Multi-valued projection (left) and single-valued projection (right)}
\label{fig:parabola3D}
\end{figure}

\section*{Acknowledgments}
FJAA and CLP were partially supported by Grant PID2022-136399NB-C21 funded by ERDF/EU and by MICIU/AEI/10.13039/501100011033. The research of HHB was supported by the Natural Sciences and Engineering Research Council of Canada. CLP was supported by Grant PREP2022-000118 funded by MICIU/AEI/10.13039/501100011033 and by ``ESF Investing in your future''.

\end{document}